\newtheorem{theorem}{Theorem}
\newtheorem{lemma}[theorem]{Lemma}
\numberwithin{theorem}{section}
\newcommand{\CC}{\mathbb{C}}
\newcommand{\sgn}{\textup{sign}}
\newcommand{\Ind}{\textup{Ind}}
\newcommand{\Res}{\textup{Res}}
\newcommand{\wreath}{\textup{ wr }}
\newcommand{\mult}{\textup{mult}}
\title{The representation theory of generalized hyperoctahedral groups}
\author{William McGovern}
\address{William McGovern, Department of Mathematics,
University of Washington, Seattle, WA 98195}
\email{mcgovern@math.washington.edu} 
\author{James Pfeiffer}
\address{James Pfeiffer, Department of Mathematics,
University of Washington, Seattle, WA 98195}
\email{jamesrpfeiffer{@}gmail.com}
\begin{document}
\begin{abstract}
We give an explicit decomposition of $\Ind(1)_{B_n}^{S_{2n}}$, following Barbasch and Vogan \cite{bv}.
We define two natural generalizations of $B_n$, and extend the proof in \cite{bv} to recursively compute these decompositions.
Although the decompositions do not appear to follow a simple pattern, we prove enough of their structure to show that they are almost never multiplicity-free.
\end{abstract}
\maketitle

\section{Introduction} \label{BV:Introduction}
Let $B_n \subseteq S_{2n}$ be the {\em hyperoctahedral group}; that is, the stabilizer of $\sigma = (12)(34)\ldots (2n-1,2n)$ in $S_{2n}$. Barbasch and Vogan \cite{bv} showed that the induced representation $\Ind_{B_n}^{S_{2n}}(1)$ decomposes into a sum of Specht modules $\oplus_\lambda S^\lambda$, one for each $\lambda \vdash 2n$ such that each $\lambda_i$ is even.
We define two subgroups $C_{m,n}$ and $D_{m,n}$ of $S_{mn}$, each of which is a natural generalization of $B_n$. 
Let $C_{m,n} = S_n \wreath S_m$ and $D_{m,n} = S_n \wreath C_m$. Here $C_m \subseteq S_m$ is the cyclic group generated by an $m$-cycle. When $m=2$, $C_{m,n} = D_{m,n} = B_n$.

These generalizations arise naturally when using symmetry to reduce the dimension of semidefinite programs in combinatorial optimization. The $S_{2n}$-module $\Ind_{B_n}^{S_{2n}}(1)$ is naturally isomorphic to the vector space of perfect matchings on $K_{2n}$. Decomposing this vector space into irreducible representations corresponds to a block diagonalization of the semidefinite program underlying the {\em theta body} for these matchings, an approximation based on sums of squares.

Similarly, the $S_{mn}$-module $\Ind_{C_{m,n}}^{S_{mn}}(1)$ is naturally isomorphic to the vector space of perfect $m$-uniform hypermatchings on the $m$-uniform complete hypergraph $K^{(m)}_{mn}$. Likewise, $\Ind_{D_{m,n}}^{S_{mn}}(1)$ is naturally isomorphic to the vector space of decompositions of the vertex set $[mn]$ of the complete graph $K_{mn}$ into $n$ disjoint $m$-cycles. Decomposing these into irreducible representations would allow symmetry reduction of the corresponding combinatorial optimization problems.

We generalize Barbasch and Vogan's proof to recursively describe the decomposition of both $\Ind_{C_{m,n}}^{S_{mn}}(1)$ and $\Ind_{D_{m,n}}^{S_{mn}}(1)$. 
We do not believe that a simple pattern for the decomposition exists for $m > 2$ in either case.
However, we are able to establish enough of the structure of $\Ind_{C_{3,n}}^{S_{3n}}(1)$ to show that, unlike the case $m=2$,
the irreducible representations are not multiplicity-free for $n \ge 5$.

The structure of this paper is as follows. In Section 2, we give a method for determining $\Ind_{C_{m,n}}^{S_{mn}}(1)$ from $\Ind_{C_{m,n-1}}^{S_{m(n-1)}}(1)$. In Section 3, we produce an explicit linear isomorphism corresponding to the $m=2$ case.
In Section 4, we prove that $\Ind_{C_{m,n}}^{S_{mn}}(1)$ is not multiplicity-free for $n \ge 5$.

\section{Recursive construction}
We generalize the induction step in the proof of Barbasch and Vogan to the cases of $C_{m,n}$ and $D_{m,n}$.
First, we will recall the main ingredients in the case of $B_n$.

\begin{lemma}\label{BV:lemma1}
As homogeneous spaces, 
$S_{2n}/B_n \cong S_{2n-1}/(B_n \cap S_{2n-1}) = S_{2n-1}/B_{n-1}$.
\end{lemma}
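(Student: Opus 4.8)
The plan is to realize both homogeneous spaces concretely as the set $\mathcal{M}_{2n}$ of perfect matchings on $[2n] = \{1,\dots,2n\}$, where $S_{2n}$ acts by relabeling vertices (equivalently, by conjugation on fixed-point-free involutions). This action is transitive, and the stabilizer of the standard matching $\bar\sigma = \{\{1,2\},\{3,4\},\dots,\{2n-1,2n\}\}$ is by definition $B_n$; hence $S_{2n}/B_n \cong \mathcal{M}_{2n}$ as $S_{2n}$-sets, and in particular as $S_{2n-1}$-sets after restriction, where we regard $S_{2n-1}\le S_{2n}$ as the stabilizer of the point $2n$. Since $S_{2n-1}/B_{n-1}$ only carries an $S_{2n-1}$-action, this restricted action is the one for which the asserted isomorphism should be understood.

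First I would check that $S_{2n-1}$ still acts transitively on $\mathcal{M}_{2n}$. Given $M \in \mathcal{M}_{2n}$, let $j$ be the partner of $2n$ in $M$. Both $\bar\sigma \setminus \{\{2n-1,2n\}\}$ and $M \setminus \{\{j,2n\}\}$ are perfect matchings on $(2n-2)$-element sets, so there is a bijection $[2n]\setminus\{2n-1,2n\} \to [2n]\setminus\{j,2n\}$ carrying one to the other; extending it by $2n-1 \mapsto j$ and $2n\mapsto 2n$ gives a permutation in $S_{2n-1}$ sending $\bar\sigma$ to $M$. Transitivity then gives $\mathcal{M}_{2n} \cong S_{2n-1}/\mathrm{Stab}_{S_{2n-1}}(\bar\sigma)$, and $\mathrm{Stab}_{S_{2n-1}}(\bar\sigma) = B_n \cap S_{2n-1}$ is immediate from the definitions of the two stabilizers.

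It remains to identify $B_n \cap S_{2n-1}$ with $B_{n-1}$. An element of $B_n$ permutes the blocks $\{1,2\},\dots,\{2n-1,2n\}$ and may transpose the two points inside each block; if in addition it fixes $2n$, it must fix the block $\{2n-1,2n\}$ setwise, hence fix $2n-1$ as well, and so it is determined by its action on $[2n-2]$, where it ranges over exactly the block permutations and within-block transpositions of $\{1,2\},\dots,\{2n-3,2n-2\}$, i.e.\ over $B_{n-1}$. Chaining the three identifications yields $S_{2n}/B_n \cong \mathcal{M}_{2n} \cong S_{2n-1}/B_{n-1}$ as $S_{2n-1}$-sets.

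The only step that requires any care is the transitivity of the $S_{2n-1}$-action; the rest is bookkeeping with the block structure of $B_n$. (Equivalently, transitivity is the statement $S_{2n} = S_{2n-1} B_n$, which one could instead deduce from the index count $[S_{2n}:B_n] = (2n-1)!! = [S_{2n-1}:B_{n-1}]$ together with the obvious injectivity of the map $gB_{n-1} \mapsto gB_n$.)
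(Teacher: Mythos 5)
Your proof is correct. It reaches the same two key facts as the paper's argument --- that $B_n\cap S_{2n-1}=B_{n-1}$ and that $S_{2n}=S_{2n-1}B_n$ --- but by a more concrete route: you model $S_{2n}/B_n$ as the set of perfect matchings of $[2n]$ and apply orbit--stabilizer for the restricted $S_{2n-1}$-action, whereas the paper works abstractly with the coset map $gB_n\mapsto (gB_n)\cap S_{2n-1}$. The abstract version is shorter, but it silently assumes that every coset of $B_n$ contains a representative in $S_{2n-1}$; that is exactly the transitivity of $S_{2n-1}$ on matchings, which you prove explicitly (and also note can be obtained from the index count $[S_{2n}:B_n]=(2n-1)!!=[S_{2n-1}:B_{n-1}]$). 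So your write-up supplies the one detail the paper's proof leaves implicit, and it has the side benefit of setting up the matchings model that the paper only introduces later, in Section~\ref{BV:section3}.
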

\begin{proof}
The second equality follows from $B_n \cap S_{2n-1} = B_{n-1}$. For the first, define a map $\phi: S_{2n} / B_n \to S_{2n-1}/B_{n-1}$ by $\phi(gB_n) = (gB_n) \cap S_{2n-1}$. When defining $\phi$, choosing the coset representative $g \in S_{2n-1}$ shows that $\phi$ is well-defined. It's straightforward to check that the $S_{2n-1}$ action commutes with $\phi$.
\end{proof}

The next step lets us determine $\Ind_{B_n}^{S_{2n}}(1)$ by considering its restriction to $S_{2n-1}$. Although in general a representation is not uniquely determined by its restriction to a subgroup, we will see that in this case there is enough extra information to determine the decomposition.

\begin{lemma}\label{BV:lemma2}
The following recursive rule holds:
$$\Res_{S_{2n-1}}^{S_{2n}}\left(\Ind_{B_n}^{S_{2n}}(1)\right) = \Ind_{S_{2n-2}}^{S_{2n-1}}\left(\Ind_{B_{n-1}}^{S_{2n-2}}(1)\right).$$
\end{lemma}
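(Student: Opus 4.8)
The plan is to use Lemma 2.1 to identify $\Ind_{B_n}^{S_{2n}}(1)$ as the permutation module $\mathbb{C}[S_{2n}/B_n] \cong \mathbb{C}[S_{2n-1}/B_{n-1}]$, and then restrict carefully. Recall that $\Ind_H^G(1) \cong \mathbb{C}[G/H]$ as $G$-modules. By Lemma 2.1, as $S_{2n-1}$-sets we have $S_{2n}/B_n \cong S_{2n-1}/B_{n-1}$, so
\[
\Res_{S_{2n-1}}^{S_{2n}}\bigl(\Ind_{B_n}^{S_{2n}}(1)\bigr) \cong \mathbb{C}[S_{2n-1}/B_{n-1}] \cong \Ind_{B_{n-1}}^{S_{2n-1}}(1).
\]
So it suffices to show $\Ind_{B_{n-1}}^{S_{2n-1}}(1) = \Ind_{S_{2n-2}}^{S_{2n-1}}\bigl(\Ind_{B_{n-1}}^{S_{2n-2}}(1)\bigr)$. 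By transitivity of induction, the right-hand side equals $\Ind_{B_{n-1}}^{S_{2n-1}}\bigl(\Ind_{B_{n-1}}^{S_{2n-2}}(1)\bigr)$ — wait, that is not right; the correct statement is that $\Ind_{S_{2n-2}}^{S_{2n-1}} \circ \Ind_{B_{n-1}}^{S_{2n-2}} = \Ind_{B_{n-1}}^{S_{2n-1}}$ only if $B_{n-1} \subseteq S_{2n-2} \subseteq S_{2n-1}$, which holds here since $B_{n-1} \subseteq S_{2(n-1)} = S_{2n-2}$. Hence transitivity of induction gives $\Ind_{S_{2n-2}}^{S_{2n-1}}\bigl(\Ind_{B_{n-1}}^{S_{2n-2}}(1)\bigr) = \Ind_{B_{n-1}}^{S_{2n-1}}(1)$, and combining with the display above completes the proof.

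Concretely, I would carry out the steps in this order. First, state the standard fact $\Ind_H^G(1) \cong \mathbb{C}[G/H]$ and that an isomorphism of $G'$-sets ($G' \le G$) induces an isomorphism of $\mathbb{C}[G']$-modules on the associated permutation modules. Second, invoke Lemma 2.1 to get the $S_{2n-1}$-set isomorphism $S_{2n}/B_n \cong S_{2n-1}/B_{n-1}$, which upon passing to permutation modules and using the first fact yields $\Res_{S_{2n-1}}^{S_{2n}}\Ind_{B_n}^{S_{2n}}(1) \cong \Ind_{B_{n-1}}^{S_{2n-1}}(1)$. Third, note the chain of subgroups $B_{n-1} \le S_{2n-2} \le S_{2n-1}$ (using $2(n-1) = 2n-2$) and apply transitivity of induction to rewrite $\Ind_{B_{n-1}}^{S_{2n-1}}(1)$ as $\Ind_{S_{2n-2}}^{S_{2n-1}}\Ind_{B_{n-1}}^{S_{2n-2}}(1)$. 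Assembling these gives the claimed identity.

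I do not expect a serious obstacle here: every step is a standard manipulation of induced modules once Lemma 2.1 is in hand. The one point that deserves care is the bookkeeping of which symmetric group $B_{n-1}$ sits inside — it must be viewed as a subgroup of $S_{2n-2}$ acting on the first $2n-2$ letters, so that the restriction in Lemma 2.1 and the transitivity of induction are compatible. A secondary subtlety is making sure the $S_{2n-1}$-action in Lemma 2.1 is the one that matches the restriction of the $S_{2n}$-action on $\mathbb{C}[S_{2n}/B_n]$; this is exactly the content of the last sentence of the proof of Lemma 2.1 ("the $S_{2n-1}$ action commutes with $\phi$"), so it can simply be cited.
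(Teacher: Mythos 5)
Your proposal is correct and follows essentially the same route as the paper: identify the restriction with the permutation module $\CC[S_{2n-1}/B_{n-1}] \cong \Ind_{B_{n-1}}^{S_{2n-1}}(1)$ via Lemma \ref{BV:lemma1}, then apply transitivity of induction along $B_{n-1} \le S_{2n-2} \le S_{2n-1}$. The paper compresses the last step into the phrase ``just a restatement of the definition,'' whereas you spell it out; the content is identical.
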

\begin{proof}
By Lemma \ref{BV:lemma1}, $\Res_{S_{2n-1}}^{S_{2n}}(\Ind_{B_n}^{S_{2n}}(1)) = S_{2n-1}/B_{n-1}$. But this is just a restatement
of the definition of $ \Ind_{B_{n-1}}^{S_{2n-1}} (1) = \Ind_{S_{2n-2}}^{S_{2n-1}}(\Ind_{B_{n-1}}^{S_{2n-2}}(1))$.
\end{proof}

We will use the original result of Barbasch and Vogan in Section \ref{BV:section3}, so we prove it here for completeness. Here we say a partition $\lambda \vdash 2n$ is {\em even} if each of its parts $\lambda_i$ is even.

\begin{theorem}\label{BV:theorem1}
The decomposition of $\Ind_{B_n}^{S_{2n}}(1)$ into irreducibles is 
$$\Ind_{B_n}^{S_{2n}}(1) \cong \bigoplus_{\stackrel{\lambda \vdash 2n}{\lambda  \textup{ is even}}} S^\lambda.$$
\end{theorem}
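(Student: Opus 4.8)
The plan is to prove this by induction on $n$, using Lemma \ref{BV:lemma2} to set up the inductive step and Frobenius reciprocity together with the branching rule to pin down the multiplicities. The base case $n=0$ (or $n=1$) is trivial: $\Ind_{B_1}^{S_2}(1) = S^{(2)}$, which is indeed the sum over even partitions of $2$. For the inductive step, suppose the decomposition $\Ind_{B_{n-1}}^{S_{2n-2}}(1) \cong \bigoplus_{\mu} S^\mu$, summed over even $\mu \vdash 2n-2$, is known. I would write $\Ind_{B_n}^{S_{2n}}(1) \cong \bigoplus_{\lambda \vdash 2n} m_\lambda S^\lambda$ for unknown multiplicities $m_\lambda \ge 0$, and aim to show $m_\lambda = 1$ when $\lambda$ is even and $m_\lambda = 0$ otherwise.

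The key computation is to restrict both sides to $S_{2n-1}$ and apply Lemma \ref{BV:lemma2}. On the left, $\Res_{S_{2n-1}}^{S_{2n}}\bigl(\bigoplus_\lambda m_\lambda S^\lambda\bigr) = \bigoplus_\lambda m_\lambda \bigoplus_{\lambda^-} S^{\lambda^-}$ by the branching rule, where $\lambda^-$ ranges over partitions of $2n-1$ obtained by removing one corner box from $\lambda$. On the right, Lemma \ref{BV:lemma2} gives $\Ind_{S_{2n-2}}^{S_{2n-1}}\bigl(\bigoplus_\mu S^\mu\bigr) = \bigoplus_\mu \bigoplus_{\mu^+} S^{\mu^+}$, where $\mu^+$ ranges over partitions of $2n-1$ obtained by adding one box to an even $\mu \vdash 2n-2$. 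Comparing multiplicities of a fixed $S^\nu$ (for $\nu \vdash 2n-1$) on both sides yields the identity $\sum_{\lambda \,:\, \nu = \lambda^-} m_\lambda = \#\{\mu \text{ even} : \nu = \mu^+\}$. The right-hand side is easy to evaluate: a partition $\nu \vdash 2n-1$ (which necessarily has at least one odd part) arises as $\mu^+$ for even $\mu$ exactly when removing one box from $\nu$ yields an all-even partition, i.e. exactly when $\nu$ has precisely one odd part; and in that case there is exactly one such $\mu$. So the right-hand side is $1$ if $\nu$ has exactly one odd part and $0$ otherwise.

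It remains to solve the resulting linear system for the $m_\lambda$. I would argue as follows. First, if $\lambda \vdash 2n$ has two or more odd parts, then removing a box from the last odd part produces a $\nu$ with at least one odd part still present and... more carefully: I would show that any $\lambda$ with a nonzero $m_\lambda$ must be even. Indeed, suppose $\lambda$ has an odd part; pick the \emph{largest} odd part $\lambda_i$ and remove a box from its corner (the box at the end of row $i$, which is a valid corner since the next row is strictly shorter or $\lambda_i$ is the last part of its size — one needs to remove from the lowest row of that size), obtaining $\nu$. Then $\nu$ has one fewer odd part than $\lambda$ in a suitable sense; tracking parity shows $\nu$ has at least one odd part only if $\lambda$ had $\ge 2$ odd parts... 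This bookkeeping is the delicate point, so let me instead phrase the cleaner argument: a partition $\nu \vdash 2n-1$ with $k$ odd parts receives contributions on the left only from $\lambda$'s obtained by adding a box, and adding a box changes the number of odd parts by $\pm 1$; the equation forces $m_\lambda$ to vanish unless adding/removing keeps us in the right parity class. Proceeding by a downward induction on the number of odd parts of $\lambda$ (or using the dominance order), one shows $m_\lambda = 0$ whenever $\lambda$ is not even, and then the equation for $\nu$ with exactly one odd part reads $m_{\lambda} = 1$ for the unique even $\lambda$ with $\nu = \lambda^-$, giving $m_\lambda = 1$ for all even $\lambda$. The main obstacle is exactly this last step: the restriction-to-$S_{2n-1}$ data does not obviously determine the $m_\lambda$ uniquely on its own, so one must carefully exploit the parity structure (how adding a single box toggles the count of odd rows) to run the induction and rule out non-even $\lambda$; I expect this parity/corner-box bookkeeping to be where the real work lies.
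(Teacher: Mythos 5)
Your setup coincides with the paper's: induct on $n$, restrict to $S_{2n-1}$ via Lemma \ref{BV:lemma2}, and compare multiplicities of each $S^\nu$ for $\nu\vdash 2n-1$. Your evaluation of the right-hand side (multiplicity $1$ if $\nu$ has exactly one odd part, $0$ otherwise) is correct, as is your observation that each such $\nu$ has a unique even parent. The genuine gap is that you never solve the resulting system, and you have correctly sensed that it \emph{cannot} be solved from the restriction data alone. The easy part needs no "downward induction on the number of odd parts": if $\lambda$ has at least two odd parts and at least one even part, take the lowest even row; the row below it is odd or absent, hence strictly shorter, so its corner is removable, and removing it produces a $\nu$ with at least three odd parts. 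The equation for that $\nu$ has right-hand side $0$, so non-negativity alone kills $m_\lambda$ (and the same works when all parts are odd and there are at least four of them). The real obstruction is $\lambda=(\lambda_1,\lambda_2)$ with both parts odd: \emph{every} box-removal from such a $\lambda$ leaves exactly one odd part, so these $\lambda$ never appear in an equation with right-hand side $0$. Along the two-row chain the equations read $m_{(2n-j,j)}+m_{(2n-j-1,j+1)}=1$, which has two admissible $\{0,1\}$-valued solutions (alternating, starting with $1$ or with $0$); the restriction to $S_{2n-1}$ genuinely does not distinguish them.

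This is exactly where the Frobenius reciprocity you mention in your opening sentence must actually be deployed: the trivial representation occurs in $\Ind_{B_n}^{S_{2n}}(1)$ with multiplicity exactly $1$, so $m_{(2n)}=1$, and then the chain forces $m_{(2n-1,1)}=0$, $m_{(2n-2,2)}=1$, and so on --- this is precisely the paper's argument for the two-row case. Once all non-even $\lambda$ are eliminated, your concluding step is fine and gives $m_\lambda=1$ for every even $\lambda$. So the architecture is right, but as written the proposal stops short of the one step that requires an input beyond branching/restriction bookkeeping, and the parity induction you sketch would stall precisely at the two-row all-odd partitions.
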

\begin{proof}
This is true for $n=1$. We use induction. Assume $\Ind_{B_{n-1}}^{S_{2n-2}}(1)$ has the described decomposition. We use Lemma \ref{BV:lemma1}. By the branching rule, $\Ind^{S_{2n-1}}_{S_{2n-2}}(\Ind_{B_{n-1}}^{S_{2n-2}}(1))$ contains each $\mu \vdash 2n-1$ having exactly one odd part, and each such $\mu$ appears once. Suppose $\Ind_{B_n}^{S_{2n}}(1)$ contains $\lambda$ with at least three rows and at least two odd parts. Then the restriction of $\lambda$ contains a $\mu$ with at least two odd parts; thus these $\lambda$ do not occur. To rule out $\lambda=(\lambda_1,\lambda_2)$ with $\lambda_1$ and $\lambda_2$ odd, note that $(2n)$ occurs in $\Ind_{B_n}^{S_{2n}}(1)$ by Frobenius reciprocity. Therefore $(2n-1,1)$ can't occur in $\Ind_{B_n}^{S_{2n}}(1)$, as it would contribute a second copy of $(2n-1)$ to $\Ind^{S_{2n-1}}_{S_{2n-2}}(\Ind_{B_{n-1}}^{S_{2n-2}}(1))$. An induction on $i$ shows that $(2n-i,i)$ occurs in $\Ind_{B_n}^{S_{2n}}(1)$ if and only if $i$ is even.

Finally, consider a $\lambda$ with at least three even odd rows. Each $\mu$ obtained by deleting a box from $\lambda$ occurs in $\Ind^{S_{2n-1}}_{S_{2n-2}}(\Ind_{B_{n-1}}^{S_{2n-2}}(1))$ exactly once, and a single copy of $\lambda$ in $\Ind_{B_n}^{S_{2n}}(1)$ is the only way remaining to account for these $\mu$.
\end{proof}

We now generalize Lemmas \ref{BV:lemma1} and \ref{BV:lemma2} to the cases of $C_{m,n}$ and $D_{m,n}$.

\begin{lemma}\label{BV:lemma3}
The following two recursive rules hold:
\begin{align*}
\Res_{S_{nm-1}}^{S_{nm}}\left(\Ind_{C_{m,n}}^{S_{mn}}(1)\right) &= \Ind_{S_{(n-1)m} \times S_{m-1}}^{S_{nm-1}}\left(\Ind_{C_{m,n-1}}^{S_{m(n-1)}}(1) \otimes 1\right), \\
\Res_{S_{nm-1}}^{S_{nm}}\left(\Ind_{D_{m,n}}^{S_{mn}}(1)\right) &= \Ind_{S_{(n-1)m} }^{S_{nm-1}}\left(\Ind_{D_{m,n-1}}^{S_{m(n-1)}}(1)\right).
\end{align*}
\end{lemma}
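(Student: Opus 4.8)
The plan is to mimic the strategy of Lemmas~\ref{BV:lemma1} and~\ref{BV:lemma2}: first analyze the homogeneous space $S_{mn}/C_{m,n}$ (resp. $S_{mn}/D_{m,n}$) under restriction to the point stabilizer $S_{mn-1}$, and then translate the resulting description back into an induced module. Fix the base point of $S_{mn}/C_{m,n}$ to be the unordered partition of $[mn]$ into the $n$ blocks $\{1,\dots,m\},\{m+1,\dots,2m\},\dots$, so that $C_{m,n}=S_n\wreath S_m$ is exactly the stabilizer of this set partition. The orbit of $S_{mn-1}$ on $S_{mn}/C_{m,n}$ is all of it (any set partition into $n$ blocks of size $m$ can be moved by an element of $S_{mn-1}$ to one having $mn$ in the last block), so as an $S_{mn-1}$-set, $S_{mn}/C_{m,n}\cong S_{mn-1}/H$, where $H$ is the stabilizer in $S_{mn-1}$ of a set partition in which the point $mn$ lies in a specified block of size $m$. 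That stabilizer is $(S_n\wreath S_{m-1})$ acting on the $n-1$ complete blocks together with the $m-1$ remaining points of the distinguished block, i.e. $H\cong C_{m,n-1}\times S_{m-1}$, embedded in $S_{(n-1)m}\times S_{m-1}\subseteq S_{mn-1}$. Therefore
\begin{align*}
\Res_{S_{mn-1}}^{S_{mn}}\!\left(\Ind_{C_{m,n}}^{S_{mn}}(1)\right)
&= \CC[S_{mn-1}/(C_{m,n-1}\times S_{m-1})]\\
&= \Ind_{C_{m,n-1}\times S_{m-1}}^{S_{mn-1}}(1),
\end{align*}
and inducing in stages through $S_{(n-1)m}\times S_{m-1}$ gives the first identity, since $\Ind_{C_{m,n-1}\times S_{m-1}}^{S_{(n-1)m}\times S_{m-1}}(1)=\Ind_{C_{m,n-1}}^{S_{(n-1)m}}(1)\otimes 1$.

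For $D_{m,n}=S_n\wreath C_m$ the same outline applies, but now the base point of $S_{mn}/D_{m,n}$ is a decomposition of $[mn]$ into $n$ \emph{oriented} $m$-cycles (equivalently, an unordered collection of $n$ cyclic orderings on the blocks). Again $S_{mn-1}$ acts transitively: any such configuration can be moved so that $mn$ sits in a prescribed position of a prescribed oriented cycle. The key point is that once we fix which cycle contains $mn$ and at which position, the stabilizer in $S_{mn-1}$ is $S_n\wreath C_m$ acting on the $n-1$ untouched cycles together with the ``gap'' left by removing $mn$ from its cycle — but removing a marked point from an oriented $m$-cycle leaves a \emph{linearly} ordered chain of $m-1$ points, whose only symmetry inside $S_{mn-1}$ is the identity. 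Hence the stabilizer is $D_{m,n-1}\times\{e\}\cong D_{m,n-1}\subseteq S_{(n-1)m}\subseteq S_{mn-1}$, giving $S_{mn}/D_{m,n}\cong S_{mn-1}/D_{m,n-1}$ as $S_{mn-1}$-sets, and therefore
\begin{align*}
\Res_{S_{mn-1}}^{S_{mn}}\!\left(\Ind_{D_{m,n}}^{S_{mn}}(1)\right)
&= \CC[S_{mn-1}/D_{m,n-1}]
= \Ind_{D_{m,n-1}}^{S_{mn-1}}(1)\\
&= \Ind_{S_{(n-1)m}}^{S_{mn-1}}\!\left(\Ind_{D_{m,n-1}}^{S_{(n-1)m}}(1)\right),
\end{align*}
the last step again being induction in stages, now with nothing tensored on because the residual chain is rigid.

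The main obstacle — and the only place the two cases genuinely diverge — is the correct bookkeeping of the residual stabilizer $H$ after deleting the point $mn$: one must be careful that $C_{m,n}$ remembers an \emph{unordered} block structure (so the deleted point leaves behind an unordered set of $m-1$ points, contributing the $S_{m-1}$ factor and the $\otimes 1$), whereas $D_{m,n}$ remembers a \emph{cyclic} order (so the deleted point leaves a rigid chain, contributing no extra factor). Everything else — transitivity of the $S_{mn-1}$-action, well-definedness of the identification $S_{mn}/G\cong S_{mn-1}/H$ exactly as in Lemma~\ref{BV:lemma1}, and the passage from homogeneous spaces to permutation modules and thence to induced representations via induction in stages — is routine and parallels the $B_n$ argument verbatim. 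I would present the transitivity and the stabilizer computation carefully for each of the two groups and treat the module-theoretic translation as immediate.
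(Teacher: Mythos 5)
Your argument is correct and follows essentially the same route as the paper's (much terser) proof: identify $C_{m,n}\cap S_{mn-1}=C_{m,n-1}\times S_{m-1}$ and $D_{m,n}\cap S_{mn-1}=D_{m,n-1}$, deduce the isomorphisms of homogeneous spaces, and finish by induction in stages; you merely supply the transitivity and stabilizer details that the paper leaves implicit. One cosmetic slip: where you describe the stabilizer in the $C_{m,n}$ case you write ``$(S_n\wreath S_{m-1})$,'' but the group you go on to (correctly) identify is $C_{m,n-1}\times S_{m-1}$.
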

\begin{proof}
The proof is a straightforward generalization of Lemmas \ref{BV:lemma1} and \ref{BV:lemma2}.
Observe that $C_{m,n} \cap S_{mn-1} = C_{m,n-1} \times S_{m-1}$ and that $D_{m,n} \cap S_{mn-1} = D_{m,n-1}$.
We then have that $S_{mn} / C_{m,n} \cong S_{mn-1} / (C_{m,n-1} \times S_{m-1})$ and 
$S_{mn} / D_{m,n} \cong S_{mn-1} / D_{m,n-1}$. The results follow.
\end{proof}
If we know the decomposition of $\Ind_{C_{m,n}}^{S_{mn}}(1)$ into irreducibles, we can use Lemma \ref{BV:lemma3} and Pieri's rule to decompose $\Ind_{C_{m,n+1}}^{S_{m(n+1)}}(1)$ into irreducibles. The same is true for $\Ind_{D_{m,n}}^{S_{mn}}(1)$ and $\Ind_{D_{m,n+1}}^{S_{m(n+1)}}(1)$, except that we use the branching rule. See Table \ref{BV:table1} for some results for $m=3$ and small $n$.

\section{Explicit isomorphism for $m=2$}\label{BV:section3}
Recall that a {\em matching} in a graph is a set of disjoint edges; we say a matching is a $k$-matching if it consists of $k$ edges. Take $S$ to be the set of $n$-matchings in $K_{2n}$; these are also known as perfect matchings. If we let $S_{2n}$ permute the vertices of $K_{2n}$, then $S$ is an $S_{2n}$-set and $\CC[S]$ an $S_{2n}$-module. Note that $S_{2n}$ acts transitively on $S$. 

Fix the matching $s = 12|34|\cdots|2n-1,2n$. Then the stabilizer of $s$ in $S_{2n}$ is exactly $B_n$ as defined in Section 
\ref{BV:Introduction}. Then $\Ind_{B_n}^{S_{2n}}(1) \cong \CC[S]$ as $S_{2n}$-modules. We give an explicit decomposition of $\CC[S]$ into irreducibles; i.e., we provide a concrete linear map from each summand $S^\lambda \to \CC[S]$. Note that the decomposition is determined up to isomorphism by Theorem \ref{BV:theorem1}. Our contribution here is to give an effectively computable isomorphism.

\begin{lemma}\label{hyperoctahedral}
Let $S$ be the set of $k$-matchings in $K_{2k}$. Then $\CC[S] \cong \bigoplus_{\lambda} S^\lambda$, where the direct sum is over all partitions $\lambda$ of $2k$ consisting of even parts. The multiplicity of each $S^\lambda$ is 1.
\end{lemma}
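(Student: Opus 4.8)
The plan is to observe that Lemma \ref{hyperoctahedral} is essentially just a restatement of Theorem \ref{BV:theorem1}, so the work is purely bookkeeping about how the combinatorial object $\CC[S]$ relates to the group-theoretic object $\Ind_{B_n}^{S_{2n}}(1)$. First I would set $n = k$ and recall from the discussion preceding the lemma that $S_{2k}$ acts transitively on the set $S$ of perfect matchings of $K_{2k}$, so that by the orbit–stabilizer correspondence $\CC[S] \cong \Ind_{\mathrm{Stab}(s)}^{S_{2k}}(1)$ for any fixed $s \in S$. Then I would verify that for the specific matching $s = 12\,|\,34\,|\,\cdots\,|\,2k-1,2k$, the stabilizer in $S_{2k}$ is exactly the hyperoctahedral group $B_k$: a permutation fixes this matching iff it permutes the $k$ blocks $\{2i-1,2i\}$ among themselves and possibly swaps the two elements within each block, which is precisely the wreath product $S_2 \wreath S_k = B_k$.

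Having identified $\CC[S] \cong \Ind_{B_k}^{S_{2k}}(1)$ as $S_{2k}$-modules, I would then simply invoke Theorem \ref{BV:theorem1}, which gives $\Ind_{B_k}^{S_{2k}}(1) \cong \bigoplus_{\lambda \vdash 2k,\ \lambda \text{ even}} S^\lambda$, each summand occurring with multiplicity one. This yields both claims of the lemma at once: the index set is the set of even partitions of $2k$, and every multiplicity is $1$.

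I do not anticipate a genuine obstacle here, since the content is entirely contained in Theorem \ref{BV:theorem1}; the only thing to be careful about is the stabilizer computation, and even that is routine. (One could, if desired, instead read Lemma \ref{hyperoctahedral} as the \emph{statement} to be refined by the explicit map constructed in the remainder of Section \ref{BV:section3} — but for the purpose of proving the decomposition up to isomorphism, the reduction to Theorem \ref{BV:theorem1} is all that is needed.)
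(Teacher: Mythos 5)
Your argument is correct for the statement as literally written, but it takes a genuinely different route from the paper's proof, and it is worth being clear about what each buys. You identify $\CC[S]\cong\Ind_{B_k}^{S_{2k}}(1)$ via the orbit--stabilizer correspondence and then cite Theorem \ref{BV:theorem1}; this is logically complete, and indeed the paper itself concedes just before the lemma that ``the decomposition is determined up to isomorphism by Theorem \ref{BV:theorem1}.'' The paper's proof, however, does more: for each even $\lambda$ it constructs an explicit $S_{2k}$-equivariant map $f\colon M^\lambda\to\CC[S]$ (sending a tabloid to the product over its rows of the sum of all perfect matchings on each row) and then shows $f$ does not vanish on $S^\lambda$ by checking that the coefficient of the matching $12\,|\,34\,|\cdots|\,2k-1,2k$ in $f(e_t)$ is strictly positive --- the key point being that any $\pi$ in the column group whose image also contains this matching is a product of an even number of disjoint transpositions, so all contributing signs are $+1$. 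Theorem \ref{BV:theorem1} is invoked only at the end, to certify that these nonzero images exhaust $\CC[S]$. Your approach is shorter and suffices to prove the isomorphism type, but it delivers no concrete copy of $S^\lambda$ inside $\CC[S]$, which is the stated purpose of Section \ref{BV:section3} (``an effectively computable isomorphism''). You flagged this distinction yourself in your closing parenthetical; if the lemma is meant to carry the explicit maps used elsewhere, you would need to supply the construction of $f$ and the nonvanishing argument rather than reduce everything to Theorem \ref{BV:theorem1}.
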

\begin{proof}
Fix an even $\lambda$. We will define a map $f: M^\lambda \to \CC[S]$. For a single-row tabloid $R$, let $f(R)$ be the sum of all matchings in $R$. For a tabloid $T$ with rows $R_i$, let $f(T) = \prod_i f(R_i)$; we interpret the product of disjoint matchings as their union. For example:
\begin{align*}
 \ytableausetup{tabloids} 
f\left(\ytableaushort{1234}\right) &= 12|34 + 13|24 + 14|23 \\
f\left(\ytableaushort{1234,56}\right) &= f([1234])f([34]) \\ &= (12|34 + 13|24 + 14|23) \cdot (56) \\ &= 12|34|56 + 13|24|56 + 14|23|56
\end{align*}
Extend by linearity to $M^\lambda$. This is a map of $S_n$-modules, so its restriction to $S^\lambda$ is either 0 or an isomorphism.

Let $t$ be the standard tableau with entries in increasing order. We will show $f(e_t) \ne 0$. $f(\{t\})$ contains the term $m=12|34|\cdots|2k-1,2k$. If $\pm\pi\{t\}$ is another term in $e_t$ such that $f(\pi\{t\})$ also contains $m$, then $2i$ and $2i-1$ must be in the same row of $\pi\{t\}$ for all $i$. But using column group operations, this is only possible if we switch $2i$ with $2j$ and $2i-1$ with $2j-1$. Therefore $\pi$ is a product of an even number of disjoint transpositions, and in particular, $\sgn (\pi) = 1$. So $f(\{t\})$ appears with positive sign in $f(e_t)$, and therefore $f(e_t) \ne 0$.

The proof is completed by noting that, per Theorem \ref{BV:theorem1}, we have accounted for each irreducible representation that appears.
\end{proof}

\section{Multiplicities occur for $m=3, n \ge 5$}
The recursion rules established in Lemma \ref{BV:lemma3} can be used to compute the decompositions of 
$\Ind_{C_{m,n}}^{S_{mn}}(1)$ and 
$\Ind_{D_{m,n}}^{S_{mn}}(1)$ for small values of $m$ and $n$; see Table \ref{BV:table1} at the end of this section. 

As discussed above, in all cases we computed, there was a unique
solution to the recursion containing a copy of the trivial representation.
However, unlike the case $m=2$, there does not seem to be any simple pattern to the decomposition.
In particular, the decompositions are not multiplicity-free after the first few values of $n$.

In this section, we consider $V_n := \Ind_{C_{3,n}}^{S_{3n}}(1)$, and determine enough of the structure of $V_n$ to show that
for $n \ge 5$, $V_n$ is not multiplicity-free. We accomplish this by considering {\em partition patterns}. A partition pattern $\lambda = (*,\lambda_1,\ldots,\lambda_k)$ represents any partition of length $k+1$ whose second through last parts equal $\lambda$. We also abandon tuple notation and simply concatenate digits, as all our entries are at most 9. For instance, the partition pattern $42$ represents the partition $(n-6,4,2)$ for any $n$. As a special case, we let $0$ denote the pattern $\emptyset$, representing the partition $(n)$ for any $n$.

For any partition pattern $\lambda$, let $\mult(\lambda,n)$ be the multiplicity of $S^\lambda$ in $V_n$. Also let $\mult(\lambda,n^-)$ be the multiplicity of $S^\lambda$ in $\Res^{S_{3n}}_{S_{3n-1}}(V_n) = \Ind_{S_{3n-3}}^{S_{3n-1}}(V_{n-1})$. It is also convenient to refer to $V_n$ and $\Res^{S_{3n}}_{S_{3n-1}}(V_n)$ as {\em level $n$} and {\em level $n^-$}, respectively. 

We will first determine the multiplicities of certain $S^\lambda$ in $V_n$. Then, we will use this structure to show that $V_n$ is not multiplicity-free for
$n \ge 5$.

\begin{lemma}\label{BV:inductivelemma}
The following $\lambda$ have multiplicity $1$ in all levels $n \ge 5$: 
$0,2,3,4,22,5,41,32$. The following $\lambda$ do not appear in any level: $1,21,31,221,311,411$.
\end{lemma}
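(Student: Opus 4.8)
The plan is to prove Lemma \ref{BV:inductivelemma} by simultaneous induction on $n$, using the recursion of Lemma \ref{BV:lemma3} together with Pieri's rule (for passing from level $(n-1)$ to level $n^-$) and the branching/restriction rule (for passing from level $n$ to level $n^-$). The key structural fact to exploit is the same one Barbasch and Vogan used: although a representation is not determined by its restriction, here we have \emph{two} descriptions of level $n^-$ — as $\Res^{S_{3n}}_{S_{3n-1}}(V_n)$ and as $\Ind_{S_{3n-3}}^{S_{3n-1}}(V_{n-1})$ — and comparing multiplicities across the two forces the multiplicities at level $n$. First I would establish the base case $n=5$ by direct computation (these small values are recorded in Table \ref{BV:table1}). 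Then, assuming the stated multiplicities hold at level $n-1$, I would compute $\mult(\lambda, n^-)$ for each relevant pattern $\lambda$ by applying Pieri's rule to $\Ind_{S_{3n-3}}^{S_{3n-1}}(V_{n-1})$: adding a horizontal strip of size $3$ to each partition pattern appearing at level $n-1$, then collecting the coefficient of $\lambda$. This gives a finite computation in the patterns $0,2,3,4,22,5,41,32$ and their one-box-larger and one-box-smaller neighbors.

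The core of the argument is then a bookkeeping step. For each pattern $\lambda$ in the list with claimed multiplicity $1$, I would show $\mult(\lambda,n^-)$ equals the total contribution \emph{forced} by the already-established multiplicities of patterns $\mu$ whose restriction (removing a single box) can produce $\lambda$, \emph{plus} the contribution of $\lambda$ itself to its own restriction. Since $\Res$ removes one box, $\lambda$ at level $n$ contributes to $\mu$ at level $n^-$ for each $\mu$ obtained by deleting a removable box of $\lambda$; dually, $\mult(\lambda,n^-) = \sum_{\mu} \mult(\mu,n)$, summing over $\mu$ obtained from $\lambda$ by \emph{adding} a box. Rearranging, $\mult(\lambda,n)$ is pinned down once the multiplicities of all $\mu \supset \lambda$ (one box larger) are known at level $n$ and $\mult(\lambda,n^-)$ is known. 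I would order the patterns so that this determination is always possible: handle the patterns of length $2$ (i.e.\ $0,2,3,4,5$ — two rows) first, using that $(3n)$ occurs with multiplicity $1$ by Frobenius reciprocity and running the induction on the second part exactly as in the proof of Theorem \ref{BV:theorem1}, and then handle the three-row patterns $22,41,32$, and finally the vanishing patterns $1,21,31,221,311,411$. The non-appearing patterns are the cleanest: for each such $\lambda$, every $\mu$ obtained by adding a box to $\lambda$ either appears on the ``does not appear'' list or is a pattern not yet reached, and one checks that $\mult(\lambda,n^-)=0$ by Pieri on level $n-1$, whence $\mult(\lambda,n)=0$; one must be slightly careful that the patterns with an odd part behave like the $\mu$ with ``too many odd parts'' in Theorem \ref{BV:theorem1}, and these get killed because their restriction already fails to appear.

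I expect the main obstacle to be organizing the induction so that the system of equations relating level $n$ and level $n^-$ multiplicities is genuinely triangular over the finite set of patterns we track — that is, verifying that no pattern outside the list $\{0,1,2,3,4,5,21,22,31,32,41,221,311,411\}$ can ``leak'' a box into one of our tracked patterns. Concretely, the worry is a pattern $\mu$ of length $4$ or with a large part (say $\mu = 6$, or $\mu = 42$, or $\mu = 222$) adding a box to land on $41$, $32$, or $22$; I would need to argue either that such $\mu$ cannot appear in $V_n$ (again via a restriction-vanishing argument: its restriction contains a pattern already known not to appear) or that its contribution is separately computable. A clean way to handle this is to prove, as an auxiliary claim folded into the same induction, that any $\lambda$ occurring in $V_n$ with at least four rows has $\lambda_2+\lambda_3+\lambda_4+\cdots$ bounded by a small constant, or more simply that $V_n$ contains no partition with four or more parts all exceeding $1$ in rows $2$ through $4$ below a fixed size — mirroring the ``at most two odd parts / controlled tail'' phenomenon from the $m=2$ case. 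Once that containment bound is in place, the finite Pieri bookkeeping closes the induction, and the lemma follows.
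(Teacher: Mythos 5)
Your overall strategy coincides with the paper's: induct on $n$ with base case $n=5$ read off Table \ref{BV:table1}, compute the level-$n^-$ multiplicities from level $n-1$ via Pieri, and then solve for the level-$n$ multiplicities using $\mult(\lambda,n^-)=\sum_{\mu}\mult(\mu,n)$, the sum over $\mu$ obtained from $\lambda$ by adding a box. But several of your concrete steps would fail as written. First, the Pieri step goes from partitions of $3n-3$ to partitions of $3n-1$, i.e.\ it adds a horizontal strip of size $2$ (by Lemma \ref{BV:lemma3} the restriction is $\Ind_{S_{3n-3}\times S_2}^{S_{3n-1}}(V_{n-1}\otimes 1)$), not of size $3$; with $3$-strips you would land at level $n$ rather than $n^-$ and the table of $\mult(\lambda,n^-)$ would come out wrong. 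Second, your treatment of the vanishing patterns is not right: it is false that $\mult(\lambda,n^-)=0$ for each of them. In fact $\mult(1,n^-)=\mult(21,n^-)=\mult(221,n^-)=1$ and $\mult(31,n^-)=2$. The patterns $21,221,311,411$ die because each is a \emph{parent} of a pattern ($11$, $211$, $211$, $311$ respectively) whose level-$n^-$ multiplicity is $0$; the pattern $1$ dies because $\mult(0,n^-)=1=\mult(0,n)+\mult(1,n)$ while $\mult(0,n)=1$ by Frobenius reciprocity; and $31$ cannot be killed by any single equation at all.

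Third, and most seriously, the auxiliary containment bound you propose is false: already at $n=5$ the module contains $[5,4,4,2]$, $[5,5,3,1,1]$ and $[7,2,2,2,2]$, and such tails only grow with $n$. Fortunately it is also unnecessary. The ``leak'' is avoided not by bounding $V_n$ but by selecting which level-$n^-$ equations to use: for each tracked pattern one uses an equation all of whose level-$n$ parents are tracked (e.g.\ $\mult(4,n^-)=\mult(4,n)+\mult(5,n)+\mult(41,n)$ controls $\mult(5,n)$, whereas the equation for $\mult(5,n^-)$ involves the untracked $6$ and $51$ and must simply be discarded). Even after this selection the system is \emph{not} triangular: writing $a=\mult(31,n)$, the usable equations only give the one-parameter family $\mult(22,n)=\mult(4,n)=1-a$, $\mult(32,n)=1+a$, $\mult(41,n)=1-2a$, $\mult(5,n)=1+3a$, and one must invoke non-negativity of multiplicities (here $\mult(41,n)\ge 0$) to force $a=0$. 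That non-negativity argument is the substance behind the paper's assertion that the stated decomposition is the ``only way to recover these multiplicities,'' and it is missing from your plan.
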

\begin{proof}
It is easy to check that this holds for $n=5$; see Table \ref{BV:table1}. Assuming by induction that the
given decomposition holds for $n-1$, we get a partial list of multiplicities
at level $n^-$:

$$
\begin{array}{c||c|c|c|c|c|c|c|c|c|c|c|c|c|c|c|c|c}
\lambda &0&1&11&2&21&3&111&4&31&22&211&1111&5&41&32&311&221 \\ \hline
\mult(\lambda,n^-)&1&1&0&2&1&2&0&3&2&2&0&0&3&3&3&0&1
\end{array}
$$
It is then straightforward to check that
the given decomposition for level $n$ is the only way to recover these multiplicities at level $n^-$.
\end{proof}

\begin{theorem}\label{BV:multiplicities}
All levels $V_n$ for $n \ge 5$ have multiplicities.
\end{theorem}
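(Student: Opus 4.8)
The plan is to leverage Lemma~\ref{BV:inductivelemma} together with the recursion of Lemma~\ref{BV:lemma3} to exhibit a single partition pattern that is forced to appear with multiplicity at least $2$ in $V_n$ for every $n \ge 5$. First I would fix a candidate pattern --- a natural guess is $\lambda = (*, 3, 2, 1)$, i.e. the pattern $321$, or perhaps $(*,4,2)$ or $(*,3,3)$ --- and compute its multiplicity at level $n^-$ using the branching/Pieri rule applied to the level-$(n-1)$ data supplied by Lemma~\ref{BV:inductivelemma}. Concretely, $\mathrm{mult}(\lambda, n^-)$ counts (with multiplicity) the partitions $\mu$ at level $(n-1)$, taken from the controlled list $0,2,3,4,22,5,41,32$ (plus whatever else is known to appear), such that $\lambda$ is obtained from $\mu$ by first adding a box (branching up from $S_{3n-3}$ to $S_{3n-2}$... more precisely applying the two-step induction $\mathrm{Ind}_{S_{3n-3}}^{S_{3n-1}}$, which adds two boxes in all possible ways by Pieri). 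The point is that several distinct $\mu$'s on our list can feed into the same $\lambda$, so $\mathrm{mult}(\lambda,n^-)$ can be made $\ge 2$.

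Next I would observe that the restriction map is "injective enough" in the sense used in the proof of Theorem~\ref{BV:theorem1}: if $\lambda$ appeared in $V_n$ with multiplicity $\le 1$, then the partitions obtainable from $\lambda$ by deleting a box would each receive at most one copy from $\lambda$ at level $n^-$, and one would need other constituents of $V_n$ to supply the remaining copies forcing $\mathrm{mult}(\lambda,n^-) \ge 2$. So the argument has two halves: (i) show $\mathrm{mult}(\lambda,n^-) \ge 2$ from the inductive data, and (ii) show that no single copy of $\lambda$ in $V_n$, nor the other \emph{known} low-lying constituents, can account for this, so that a second copy of $\lambda$ (or of some near-neighbor partition that is itself then forced to have multiplicity, etc.) must occur. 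In practice the cleanest route is probably to identify one specific $\lambda$ such that $\mathrm{mult}(\lambda, n^-) = 2$ is forced while \emph{every} partition $\nu$ at level $n$ other than $\lambda$ that could restrict to contain $\lambda$ is already pinned down by Lemma~\ref{BV:inductivelemma} to have multiplicity $0$ or to contribute at most one copy --- then subtracting the known contributions leaves $\ge 2$ for $\lambda$ itself.

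I would organize the writeup as: (1) state the chosen pattern $\lambda$ and, via Pieri on the level-$(n-1)$ list, compute a lower bound $\mathrm{mult}(\lambda,n^-) \ge 2$ uniformly in $n \ge 5$; (2) enumerate the partitions $\nu \vdash 3n$ with $\lambda \in \{\nu \text{ minus a box}\}$, separating those controlled by Lemma~\ref{BV:inductivelemma} (contributing a known, small amount) from $\lambda$ itself; (3) conclude $\mathrm{mult}(\lambda,n) \ge 2$. The base case $n=5$ is handled directly by Table~\ref{BV:table1}, and indeed one should check there which pattern first exhibits multiplicity $2$ and carry that same pattern through the induction.

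The main obstacle I anticipate is step (2): ruling out the possibility that the extra copies at level $n^-$ are "explained away" by constituents $\nu$ of $V_n$ that are \emph{not} on the controlled list of Lemma~\ref{BV:inductivelemma} --- i.e. partitions with large parts or many rows about which we have said nothing. To close this gap I would either (a) choose $\lambda$ so shallow (few rows, small parts) that any $\nu$ restricting onto it is automatically also shallow and hence on the controlled list, or (b) prove an auxiliary bound, in the spirit of the "at least three odd parts" sub-argument of Theorem~\ref{BV:theorem1}, showing that deep partitions cannot contribute to $\mathrm{mult}(\lambda,n^-)$ for the specific $\lambda$ at hand. Option (a) is preferable if a suitable $\lambda$ exists; the table for $n=5$ should reveal the right choice.
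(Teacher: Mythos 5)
Your overall framework is the right one---push the level-$(n-1)$ data of Lemma~\ref{BV:inductivelemma} up to level $n^-$, then use the fact that each pattern has a bounded number of parents at level $n$ to force a multiplicity---but the specific route you favor does not close, and the key idea is missing. The obstacle is exactly the one you flag in your last paragraph, and your option (a) cannot resolve it: any pattern $\lambda$ whose parents all lie on the controlled list of Lemma~\ref{BV:inductivelemma} (e.g.\ $\lambda=3$, with parents $3,4,31$) yields only a consistency check and no new information, while every pattern large enough to be informative has at least two uncontrolled parents. The smallest interesting case is $\lambda=41$ at level $n^-$, whose parents are $41,411,51,42$; since $\mult(41,n^-)=3$, $\mult(41,n)=1$, $\mult(411,n)=0$, the lemma only pins down the \emph{sum} $\mult(51,n)+\mult(42,n)=2$. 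Feeding this back through Pieri gives $\mult(51,n^-)=\mult(51,n-1)+3$ and $\mult(42,n^-)=\mult(42,n-1)+4$, so the individual values at level $n^-$ are genuinely undetermined by the available data---only their sum, which is $9$. In particular no single pattern is exhibited with $\mult(\lambda,n^-)$ exceeding its number of parents, which is what your step (3) requires; note also that the bound $\mult(\lambda,n^-)\ge 2$ you aim for in step (1) is far too weak, since a pattern with $k$ parents forces nothing until its level-$n^-$ multiplicity exceeds $k$.

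The fix is a pigeonhole over the \emph{pair}: from $\mult(51,n^-)+\mult(42,n^-)=9$, one of the two is at least $5$; each of $51$ and $42$ has exactly four parents at level $n$ ($51,61,52,511$ and $42,52,43,421$ respectively), so if $V_n$ were multiplicity-free each would have level-$n^-$ multiplicity at most $4$. Hence some parent occurs with multiplicity at least $2$. This argument deliberately does not identify \emph{which} partition carries the multiplicity---the data cannot decide between, say, $\mult(51,n^-)=5$ and $\mult(42,n^-)=5$---which is precisely why your plan to isolate one specific $\lambda$ and subtract off known contributions cannot be executed with Lemma~\ref{BV:inductivelemma} alone. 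One further caution on your step (1): the level-$n^-$ object is $\Ind_{S_{3(n-1)}\times S_2}^{S_{3n-1}}(V_{n-1}\otimes 1)$, which by Pieri's rule adds a \emph{horizontal} $2$-strip, not two boxes in all possible ways; the latter would inflate the counts (e.g.\ it gives $\mult(4,n^-)=4$ rather than the correct $3$) and break the numerics throughout.
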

\begin{proof}
By Lemma \ref{BV:inductivelemma}, it follows that $\mult(51,n) + \mult(42,n) = 2$ for all $n$.
By considering the relevant children at level $n-1$, we can see that $\mult(51,n^-) + \mult(42,n^-) = 9$.
 Therefore, one of
$\mult(51,n^-)$, $\mult(42,n^-) \ge 5$. But since each of $51$ and $42$ has four parents at
level $n$, we must have multiplicities at level $n$.
\end{proof}

\begin{table}[h]
\caption{The decomposition of $\Ind_{C_{m,n}}^{S_{mn}}(1)$ into irreducible representations for $m=3$ and small $n$. Note that $n=5$ is the first to contain multiplicities.}
\begin{center} 
    \begin{tabular}{|c|c|p{12cm}|}
    \hline
    $n$ & $m$ & $\Ind_{C_{m,n}}^{S_{mn}}(1)$ \\ \hline
    2 & 3 & $[4, 2]$, $[6]$ \\ \hline
    3 & 3 & $[4, 4, 1]$, $[5, 2, 2]$, $[6, 3]$, $[7, 2]$, $[9]$  \\ \hline
    4 & 3 & $[4, 4, 4]$, $[5, 4, 2, 1]$, $[6, 2, 2, 2]$, $[6, 4, 2]$, $[6, 6]$, $[7, 3, 2]$, $[7, 4, 1]$, $[8, 2, 2]$, $[8, 4]$, $[9, 3]$, $[10, 2]$, $[12]$ \\ \hline
    5 & 3 & $[5, 4, 4, 2]$, $[5, 5, 3, 1, 1]$, $[6, 4, 2, 2, 1]$, $[6, 4, 4, 1]$, $[6, 5, 2, 2]$, $[6, 6, 3]$, $[7, 2, 2, 2, 2]$, $[7, 4, 2, 2]$, $[7, 4, 3, 1]$, $[7, 4, 4]$, $[7, 5, 2, 1]$, $[7, 6, 2]$, $[8, 3, 2, 2]$, $[8, 4, 2, 1]$, $[8, 4, 3]$, $[8, 5, 2]$, $[8, 6, 1]$, $[9, 2, 2, 2]$, $[9, 4, 2]$, $[9, 4, 2]$, $[9, 6]$, $[10, 3, 2]$, $[10, 4, 1]$, $[10, 5]$, $[11, 2, 2]$, $[11, 4]$, $[12, 3]$, $[13, 2]$, $[15]$ \\ \hline
    \end{tabular}
\label{BV:table1}
\end{center}
\end{table}

\bibliographystyle{plain}
\bibliography{young}

\end{document}